\documentclass[11pt]{amsart}
\usepackage[utf8]{inputenc}
\usepackage{amsmath}
\usepackage{amsfonts}
\usepackage{amssymb}
\usepackage{tikz-cd}
\usepackage{enumitem}
\usepackage{stmaryrd}
\usetikzlibrary{decorations.markings}
\usetikzlibrary{patterns,patterns.meta}
\tikzset{double line with arrow/.style args={#1,#2}{decorate,decoration={markings,%
mark=at position 0 with {\coordinate (ta-base-1) at (0,1pt);
\coordinate (ta-base-2) at (0,-2pt);},
mark=at position 1 with {\draw[#1] (ta-base-1) -- (0,1pt);
\draw[#2] (ta-base-2) -- (0,-2pt);
}}}}
\tikzset{Equal/.style={-,double line with arrow={-,-}}}

\newcommand\N{\mathbb{N}}
\newcommand\C{\mathbb{C}}

\newcommand{\LC}{\mathcal{L}}

\newcommand{\Mov}{\mathrm{Mov}\,}

\newcommand{\Vect}{\mathrm{Vect}_k}

\newtheorem{theorem}{Theorem}
\newtheorem*{theorem*}{Theorem}

\newtheorem{corollary}[theorem]{Corollary}
\newtheorem{lemma}[theorem]{Lemma}

\theoremstyle{definition}
\newtheorem{definition}{Definition}

\theoremstyle{remark}

\let\<\langle
\let\>\rangle

\let\uml\"
\thanks{}
\title{Isomorphic Loday functors of non-homeomorphic spaces} 

\author{Igor Baskov}

\begin{document}
\newcommand{\Addresses}{{
  \bigskip
  \footnotesize
\noindent
  \textsc{St. Petersburg Department of Steklov Mathematical Institute\\of Russian Academy of Sciences}\par\noindent
  \textit{E-mail address}: \texttt{baskovigor@pdmi.ras.ru}
}}

\maketitle
\begin{abstract}
Each commutative algebra $A$ gives rise to a representation $\mathcal{L}_A$, which we call the Loday functor of $A$, of the category $\Omega$ of finite sets and surjective maps.
In this paper we present two (infinite-dimensional) non-isomorphic algebras over $\mathbb{C}$ with isomorphic Loday functors -- the algebras of continuous functions on the M\"obius strip and on the cylinder.
\end{abstract}

\section{Introduction.}

Let $\Omega$ be the category whose objects are the sets $\<n\> = \lbrace 1,\dots,n\rbrace$, $n\in\N$, and whose morphisms are surjective maps.
A non-unital commutative $k$-algebra $A$ over a field $k$ gives rise to a functor $\LC_A:\Omega \to \Vect$ that takes an object $\<n\>$ to the vector space $A^{\otimes n}$ and a surjection $\sigma:\<n\>\to \<m\>$ to the linear map $\sigma^*:A^{\otimes n}\to A^{\otimes m}$,
$$a_1\otimes\dots\otimes a_n \mapsto b_1\otimes\dots\otimes b_m,$$
$$b_j = \prod_{i\in \sigma^{-1}(j)}a_i.$$
Similar functors were considered in \cite{loday}.

In~\cite{podkorytov1} Podkorytov proves that if for finite-dimensional algebras $A$ and $B$ over an algebraically closed field $k$ the functors $\LC_A$ and $\LC_B$ are isomorphic, then the algebras $A$ and $B$ are isomorphic.

In this paper we present two non-isomorphic algebras over $\C$ with isomorphic Loday functors.
In particular, we show that the $\C$-algebra of continuous functions on the cylinder and the $\C$-algebra of continuous functions on the M\"{o}bius strip have isomorphic Loday functors.
A related result in the discrete case was obtained in~\cite{podkorytov2}.

It is unclear what information about the space $X$ can be recovered from the functor $\LC_{C(X)}$.
By Gelfand--Kolmogorov theorem if for compact Hausdorff spaces $X$ and $Y$ the algebras $C(X)$ and $C(Y)$ are isomorphic as algebras then $X$ and $Y$ are homeomorphic.
As a result of Theorem~\ref{thm:main} we see that from an isomorphism of $\LC_{C(X)}$ and $\LC_{C(Y)}$ one cannot conclude that the spaces $X$ and $Y$ are homeomorphic.

\subsection*{Acknowledgements}
The proof in this paper is based on ideas from~\cite{podkorytov2}.

\section{Projections and twists.}

Let $X$ and $Y$ be topological spaces.
Denote by $\<Y^X\>$ the linear span over $\C$ of the set of all continuous maps $X\to Y$.
We call an element of $\<Y^X\>$ an \emph{ensemble}.
An ensemble
$$A = \sum_{i=1}^n u_ia_i \in \<Y^X\>,$$
where $a_i:X\to Y$ are continuous maps and $u_i\in\C$, gives rise to a linear map $C(Y)\to C(X)$.
Moreover, for each $r\in \N$ such an $A$ induces a linear map
$$A^{(r)}:C(Y)^{\otimes r}\to C(X)^{\otimes r}$$
by the rule
$$A^{(r)} (f_1\otimes\dots\otimes f_r) = \sum_{i=1}^n u_i (f_1\circ a_i)\otimes\dots\otimes (f_r\circ a_i).$$

We say that a topological space $X$ is completely Hausdorff if for any two points $x\neq y$ in $X$ there exists a function $f:X\to [0,1]$ such that $f(x) = 0$ and $f(y) = 1$.

\begin{definition}
Let $r\in\N$. An ensemble $A\in\<Y^X\>$ is called $r$-coherent if $A^{(r)}=0$.
\end{definition}

There is a simple way to check if $A$ is $r$-coherent.
For any finite set $T\hookrightarrow X$ there is the restriction map
$$\<Y^X\>\to \<Y^T\>, \qquad A \mapsto A|_T = \sum_{i=1}^n u_i(a_i|_T).$$

\begin{lemma}\label{lem:FoldabilityCriteria}
Let $X$ and $Y$ be topological spaces and $A\in \<Y^X\>$ be an ensemble.
If $A|_T = 0$ for all finite sets $T\subset X$ of at most $r$ points, then the ensemble $A$ is $r$-coherent.
If $Y$ is completely Hausdorff, then the converse is true.
\end{lemma}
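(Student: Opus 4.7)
The plan is to treat the two directions separately. For the first implication I will reduce to pointwise evaluation on $X^r$, using the standard injectivity of the canonical map $C(X)^{\otimes r}\hookrightarrow \Fun(X^r,\C)$ that sends $f_1\otimes\dots\otimes f_r$ to the function $(x_1,\dots,x_r)\mapsto \prod_k f_k(x_k)$; this injectivity is proved by a short induction on $r$ using that a linearly independent family of functions on $X$ remains linearly independent. For arbitrary $f_1,\dots,f_r\in C(Y)$ and $(x_1,\dots,x_r)\in X^r$, the evaluation $\sum_i u_i\prod_k f_k(a_i(x_k))$ depends only on the restrictions $a_i|_T$, where $T=\{x_1,\dots,x_r\}\subset X$ is the underlying set (of at most $r$ points). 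Regrouping the index by $\alpha=a_i|_T$ rewrites the evaluation as $\sum_\alpha\bigl(\sum_{i\colon a_i|_T=\alpha}u_i\bigr)\prod_k f_k(\alpha(x_k))$, and the hypothesis $A|_T=0$ forces every inner coefficient to vanish.

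For the converse, assume $Y$ is completely Hausdorff and $A$ is $r$-coherent, and suppose toward contradiction that $A|_T\neq 0$ for some $T=\{x_1,\dots,x_s\}\subset X$ with $s\leq r$. Expanding $A|_T=\sum_\alpha c_\alpha\alpha$ over the distinct continuous maps $\alpha\colon T\to Y$ appearing, pick some $\alpha_0$ with $c_{\alpha_0}\neq 0$. I will build test functions $f_1,\dots,f_r\in C(Y)$ so that the evaluation of $A^{(r)}(f_1\otimes\dots\otimes f_r)$ at the tuple $(y_1,\dots,y_r):=(x_1,\dots,x_s,x_1,\dots,x_1)\in X^r$ equals $c_{\alpha_0}\neq 0$, contradicting $A^{(r)}=0$. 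Take $f_k\equiv 1$ for $k>s$. For each $k\leq s$, use complete Hausdorffness: for every $\alpha$ in the expansion with $\alpha(x_k)\neq \alpha_0(x_k)$, pick $h_{k,\alpha}\colon Y\to[0,1]$ with $h_{k,\alpha}(\alpha_0(x_k))=1$ and $h_{k,\alpha}(\alpha(x_k))=0$, and let $f_k$ be the (finite) product of these $h_{k,\alpha}$. Then $\prod_k f_k(\alpha_0(y_k))=1$, while for every $\alpha\neq\alpha_0$ there is some $j\leq s$ with $\alpha(x_j)\neq\alpha_0(x_j)$, forcing $f_j(\alpha(y_j))=0$ and hence $\prod_k f_k(\alpha(y_k))=0$. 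The evaluation therefore collapses to $c_{\alpha_0}$.

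The main obstacle is the converse. Complete Hausdorffness only separates pairs of points in $Y$, so several pairwise separations must be assembled into each single function $f_k$ via a finite product, and the tuple $(x_1,\dots,x_s)$ has to be padded to length $r$ so that the extra coordinates contribute a harmless factor of $1$ — both handled by the construction above. Everything else — the injectivity used in the first direction, the regrouping, and the final contradiction — is routine bookkeeping.
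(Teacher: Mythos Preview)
Your proposal is correct and follows essentially the same approach as the paper: both directions proceed by evaluating $A^{(r)}(f_1\otimes\cdots\otimes f_r)$ at a tuple $(x_1,\dots,x_r)\in X^r$, regrouping the sum by the restrictions $a_i|_T$, and (for the converse) using completely Hausdorff separation to build functions $f_k$ that isolate a single term. Your write-up is slightly more explicit than the paper's about padding a set $T$ with $|T|<r$ to a length-$r$ tuple and about assembling each $f_k$ as a finite product of pairwise separators, but these are only cosmetic differences.
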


\begin{proof}
First, assume $A|_T = 0$ for all finite sets $T\subset X$ of at most $r$ points.
Take $T = \lbrace x_1,\dots, x_r \rbrace\subset X$ (some of the points may coincide).
In the ensemble $A|_T$ regroup the indices to get
$$A|_T = \sum _{\psi:T\to Y} v_\psi\cdot\psi,$$
where the sum is taken over all maps $T\to Y$.
Then the condition $A|_T = 0$ means that $v_\psi = 0$ for all functions $\psi:T\to Y$.
Take $f_1,\dots, f_r \in C(Y)$ and a point $(x_1,\dots,x_r)\in X^r$.
With the natural inclusion $C(X)^{\otimes r}\hookrightarrow C(X^r)$ we can consider the element
$A^{(r)} (f_1\otimes\dots\otimes f_r)$ as a function on $X^r$.
We have
$$A^{(r)} (f_1\otimes\dots\otimes f_r)(x_1,\dots,x_r) = \sum_{i=1}^n u_i f_1(a_i(x_1))\dots f_r(a_i(x_r)).$$
Again, regroup the summands to get
$$A^{(r)} (f_1\otimes\dots\otimes f_r)(x_1,\dots,x_r) = \sum _{\psi:T\to Y} v_\psi\cdot f_1(\psi(x_1))\dots f_r(\psi(x_r)) = 0.$$

For the other direction, assume that $A^{(r)} (f_1\otimes\dots\otimes f_r) = 0$ for all $f_1,\dots, f_r \in C(Y)$.
Suppose there exists a finite set $T = \lbrace x_1,\dots,x_r\rbrace\subset X$ with $A|_T\neq 0$.
Then there is a function $\varphi:T\to Y$ such that
$$\sum_{i:\,a_i|_T = \varphi} u_i = v_\varphi \neq 0.$$
Define the sets $S_j = \lbrace a_i(x_j)\mid i=1,\dots,n\rbrace$.
Because $Y$ is completely Hausdorff we can choose the continuous functions $f_j:Y\to\C$, $j=1,\dots,r$, such that $f_j(\varphi(x_j)) = 1$ and $f_j = 0$ on $S_j\setminus \lbrace \varphi(x_j) \rbrace$.
Then
$$A^{(r)} (f_1\otimes\dots\otimes f_r)(x_1,\dots,x_r) =$$
$$\sum _{\substack{\psi:T\to Y, \\ \psi\neq\varphi}} v_\psi\cdot f_1(\psi(x_1))\dots f_r(\psi(x_r)) + v_\varphi\cdot f_1(\varphi(x_1))\dots f_r(\varphi(x_r)) = v_\phi\neq 0.$$
\end{proof}

\begin{definition}
Let $X$ and $Y$ be topological spaces.
The following set of functions is called a set of \emph{projections and twists}:
\begin{enumerate}
\item
$p_n:X\to X$, $q_n:Y\to Y$, $n\in\N$, satisfying the identities 
\begin{itemize}
\item
$p_n ^2 = p_n$,
\item
$p_i\circ p_j = p_j\circ p_i$,
\item
$q_n^2 = q_n$,
\item
$q_i\circ q_j = q_j\circ q_i$,
\item
the sets $\Mov p_n = \lbrace x\in X|\,p_n(x)\neq x \rbrace$ are pairwise disjoint,
\item
the sets $\Mov q_n = \lbrace y\in Y|\,q_n(y)\neq y \rbrace$ are pairwise disjoint.
\end{itemize}
\item
$h_n:X\to Y$, $\bar{h}_n:Y\to X$, $n\in\N$, satisfying the identities
\begin{itemize}
\item
$q_j\circ h_i = h_i \circ p_j$,
\item
$p_j\circ \bar{h}_i = \bar{h}_i \circ q_j$,
\item
$\bar{h}_j\circ h_i = p_j \circ p_i$,
\item
$h_i\circ \bar{h}_j = q_i \circ q_j$.
\end{itemize}
\end{enumerate}
If such a collection of maps exists, we say that $X$ and $Y$ are connected by projections and twists.
\end{definition}
In Theorem~\ref{thm:main} we show that if $X$ and $Y$ are connected by projections and twists then the functors $\LC_{C(X)}$ and $\LC_{C(Y)}$ are isomorphic.

Define the ensembles
$$Z_n = \prod_{i=1}^n (1-p_i) \in \<X^X\>,\qquad \bar{Z}_n = \prod_{i=1}^n (1-q_i) \in \<Y^Y\>,$$
where the product is the composition and $1$ is the identity map.
We also set $Z_0 = 1$ and $\bar{Z}_0 = 1$.

As $p_n$ and $q_n$ are idempotent, the elements $1-p_n$ and $1-q_n$ are also idempotent and hence $Z_n$ and $\bar{Z}_n$ are idempotent.

\begin{lemma}\label{lem:ZIdentity}
We have the identities
$$Z_0p_1+Z_1p_2+\dots+Z_{n-1}p_n = 1-Z_n,$$
$$\bar{Z}_0q_1+\bar{Z}_1q_2+\dots+\bar{Z}_{n-1}q_n = 1-\bar{Z}_n$$
for $n\in\N$.
\end{lemma}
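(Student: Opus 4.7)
The plan is to give a telescoping proof. I treat the two identities symmetrically, so I will only write out the one for the $p_i$'s; the proof for $\bar{Z}_n$ is verbatim the same with $p_i$ replaced by $q_i$ and $Z_i$ by $\bar{Z}_i$.

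The key observation is that each summand $Z_{k-1} p_k$ can be rewritten as a difference. Indeed, writing $p_k = 1 - (1 - p_k)$, we obtain
\[
Z_{k-1} p_k = Z_{k-1} - Z_{k-1}(1-p_k).
\]
To identify $Z_{k-1}(1-p_k)$ with $Z_k$, I would invoke the commutativity of the $p_i$ (hence also of the $1-p_i$): since $p_i p_j = p_j p_i$ for all $i,j$, the factors in $Z_k = \prod_{i=1}^k (1-p_i)$ can be reordered, so $Z_k = Z_{k-1}(1-p_k)$. This turns the identity above into $Z_{k-1} p_k = Z_{k-1} - Z_k$.

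Summing this over $k=1,\dots,n$ gives a telescoping sum,
\[
\sum_{k=1}^{n} Z_{k-1} p_k \;=\; \sum_{k=1}^{n} (Z_{k-1} - Z_k) \;=\; Z_0 - Z_n \;=\; 1 - Z_n,
\]
which is the desired identity. (Alternatively, one can run a one-line induction on $n$: the base case $Z_0 p_1 = p_1 = 1 - Z_1$ is immediate, and the step adds $Z_n p_{n+1} = Z_n - Z_{n+1}$ to both sides of the inductive hypothesis.)

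There is no real obstacle: the only substantive point is the commutativity of the $p_i$, which is part of the definition of projections and twists and is exactly what legitimizes rewriting $Z_{k-1}(1-p_k)$ as $Z_k$. The analogous claim for $\bar{Z}_n$ uses the parallel commutativity assumption on the $q_i$.
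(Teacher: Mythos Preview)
Your proof is correct and is essentially the same as the paper's: the paper runs the induction you mention parenthetically, and its inductive step is exactly your telescoping identity $Z_{n-1}p_n = Z_{n-1} - Z_n$. One minor remark: you do not actually need commutativity to get $Z_{k-1}(1-p_k) = Z_k$, since $Z_k = \prod_{i=1}^k (1-p_i)$ already has the factors in this order by definition.
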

\begin{proof}
We prove the first identity by induction on $n\geq 1$.
The base case $n=1$ is trivial.
Assume the identity is true for $n-1$.
We have
$$Z_0p_1+Z_1p_2+\dots+Z_{n-2}p_{n-1}+Z_{n-1}p_n = $$
$$ = 1-Z_{n-1} + Z_{n-1}p_n = 1 -Z_{n-1}(1-p_n) = 1-Z_n.$$
The second identity is analogous.
\end{proof}

Next, define the ensembles
$$T_n = \sum _{i=1}^n \bar{Z}_{i-1}h_i\in \<Y^X\>,\qquad \bar{T}_n = \sum _{i=1}^n Z_{i-1}\bar{h}_i\in \<X^Y\>.$$

\begin{lemma}\label{lem:TCalculation}
For any $n\in\N$ we have the following identities
$$T_n\bar{T}_n = 1-\bar{Z}_n,\qquad \bar{T}_nT_n = 1-Z_n.$$
\end{lemma}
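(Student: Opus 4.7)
The plan is to prove both identities by direct expansion as double sums, followed by simplification using the intertwining and idempotence relations. The two identities are completely symmetric under swapping $(p,q,h,\bar h,Z,\bar Z)$, so I would focus on the first, $T_n\bar T_n = 1-\bar Z_n$, and indicate at the end that the second follows by the same argument.

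The first step is an auxiliary ``intertwining'' observation: for all $i,k$ one has $h_i Z_k = \bar Z_k h_i$ and $\bar h_i \bar Z_k = Z_k \bar h_i$. Each of these follows by applying the relation $q_j h_i = h_i p_j$ (resp.\ $p_j \bar h_i = \bar h_i q_j$) to each factor $(1-p_j)$ of $Z_k$ (resp.\ $(1-q_j)$ of $\bar Z_k$), valid because the $p_j$ (resp.\ $q_j$) commute and are idempotent, so the products $Z_k,\bar Z_k$ are well-defined. Using this and the relation $h_i\bar h_j = q_iq_j$, I can expand
\[
T_n\bar T_n \;=\; \sum_{i,j=1}^n \bar Z_{i-1} h_i Z_{j-1}\bar h_j \;=\; \sum_{i,j=1}^n \bar Z_{i-1}\bar Z_{j-1} h_i\bar h_j \;=\; \sum_{i,j=1}^n \bar Z_{\max(i,j)-1}\, q_i q_j,
\]
where the last equality uses that the commuting idempotents $1-q_\ell$ collapse $\bar Z_{i-1}\bar Z_{j-1}$ to $\bar Z_{\max(i,j)-1}$.

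The key observation is that off-diagonal terms vanish: if $i\neq j$, then $\min(i,j)<\max(i,j)$, so $(1-q_{\min(i,j)})$ appears as a factor in $\bar Z_{\max(i,j)-1}$, and $(1-q_\ell)q_\ell=0$ kills the summand. Only diagonal contributions survive, and with $q_i^2=q_i$ they give $\sum_{i=1}^n \bar Z_{i-1} q_i$, which equals $1-\bar Z_n$ by Lemma~\ref{lem:ZIdentity}. The identity $\bar T_n T_n = 1-Z_n$ is obtained by the identical argument, using $\bar h_j h_i = p_j p_i$ in place of $h_i\bar h_j = q_iq_j$ and Lemma~\ref{lem:ZIdentity} for the $p$-side. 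I do not anticipate a real obstacle; the only point requiring care is bookkeeping of which relation is applied in which direction, in particular applying the intertwining uniformly to slide every $Z$-factor past every $h$- or $\bar h$-factor before using $h\bar h = qq$ or $\bar h h = pp$.
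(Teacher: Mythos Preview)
Your proof is correct and follows essentially the same route as the paper's: expand the double sum, intertwine $h_iZ_{j-1}=\bar Z_{j-1}h_i$, replace $h_i\bar h_j$ by $q_iq_j$, kill the off-diagonal terms via $(1-q_{\min(i,j)})q_{\min(i,j)}=0$, and sum the diagonal using Lemma~\ref{lem:ZIdentity}. The only cosmetic difference is that you collapse $\bar Z_{i-1}\bar Z_{j-1}$ to $\bar Z_{\max(i,j)-1}$ before arguing, whereas the paper leaves the product as is and simply observes that the larger of the two factors already contains the killing $(1-q_{\min(i,j)})$.
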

\begin{proof}
We have
$$T_n\bar{T}_n = \left( \sum _{i=1}^n \bar{Z}_{i-1}h_i \right) \left( \sum _{i=1}^n Z_{i-1}\bar{h}_i \right) = \sum _{i,j=1}^n \bar{Z}_{i-1}h_iZ_{j-1}\bar{h}_j.$$
Using the properties of projections and twists we can swap $h_iZ_{j-1}$ to $\bar{Z}_{j-1}h_i$ and simplify to get
$$T_n\bar{T}_n = \sum _{i,j=1}^n \bar{Z}_{i-1}\bar{Z}_{j-1}h_i\bar{h}_j = \sum _{i,j=1}^n \bar{Z}_{i-1}\bar{Z}_{j-1}q_iq_j.$$
Split this sum into the diagonal and the off-diagonal sums:
$$T_n\bar{T}_n = \sum _{i=1}^n \bar{Z}_{i-1}\bar{Z}_{i-1}q_iq_i + \sum _{i,j=1,\;\;i\neq j}^n \bar{Z}_{i-1}\bar{Z}_{j-1}q_iq_j.$$
For the diagonal sum we get
$$\sum _{i=1}^n \bar{Z}_{i-1}\bar{Z}_{i-1}q_iq_i = \sum _{i=1}^n \bar{Z}_{i-1}q_i = 1-\bar{Z}_n$$
by Lemma~\ref{lem:ZIdentity}.
To deal with the off-diagonal sum, consider one summand
$$\bar{Z}_{i-1}\bar{Z}_{j-1}q_iq_j,$$
where $i>j$ (the case $i<j$ is similar).
Note that the element $\bar{Z}_{i-1}$ contains the factor $1-q_j$.
Now, because all of the maps $q_i$ commute with each other, we have $\bar{Z}_{i-1}q_j = 0$ and the whole off-diagonal sum is zero.
The second identity is analogous.
\end{proof}

\begin{lemma}\label{lem:ZAreFoldable}
The ensembles $Z_{n+1}$ and $\bar{Z}_{n+1}$ are $n$-coherent.
\end{lemma}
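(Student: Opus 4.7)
The plan is to reduce the claim via Lemma~\ref{lem:FoldabilityCriteria} and then use a pigeonhole argument combined with a sign-cancelling involution on subsets. The forward direction of Lemma~\ref{lem:FoldabilityCriteria} requires no separation hypothesis and says that an ensemble is $n$-coherent whenever its restriction to every finite subset of size at most $n$ vanishes. So it suffices to prove that $Z_{n+1}|_T = 0$ for each finite $T \subset X$ with $|T| \leq n$, and analogously for $\bar{Z}_{n+1}$; I would write up only the first case, since the $q_i$'s satisfy the same relations as the $p_i$'s and the argument transfers verbatim.

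Since the projections $p_i$ commute pairwise, I would expand the product as
$$Z_{n+1} = \sum_{S \subseteq \{1,\dots,n+1\}} (-1)^{|S|} P_S, \qquad P_S = \prod_{i \in S} p_i,$$
with the composition taken in any order. Here is the key observation: because the sets $\Mov p_i$ are pairwise disjoint, each point of $T$ lies in at most one $\Mov p_i$; combined with $|T| \leq n < n+1$, this forces some index $i^* \in \{1,\dots,n+1\}$ to satisfy $T \cap \Mov p_{i^*} = \emptyset$, that is, $p_{i^*}$ fixes every point of $T$.

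To finish, I would pair subsets of $\{1,\dots,n+1\}$ via the involution $S \mapsto S \triangle \{i^*\}$, which reverses the sign $(-1)^{|S|}$. Using that $p_{i^*}$ commutes with every other $p_j$ and acts as the identity on each $x \in T$, one checks $P_S|_T = P_{S \triangle \{i^*\}}|_T$, so paired terms cancel and $Z_{n+1}|_T = 0$. The only real content is the pigeonhole step; beyond that, the disjoint-support and commutativity hypotheses make the involution manifestly sign-reversing, so I do not anticipate any genuine obstacle.
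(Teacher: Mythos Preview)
Your argument is correct and follows the same route as the paper: reduce via Lemma~\ref{lem:FoldabilityCriteria}, use pigeonhole on the disjoint sets $\Mov p_i$ to find an index $i^*$ with $p_{i^*}$ fixing $T$ pointwise, and conclude $Z_{n+1}|_T=0$. The only cosmetic difference is in the last step: rather than expanding the product and pairing subsets via $S\mapsto S\triangle\{i^*\}$, the paper simply notes that $(1-p_{i^*})|_T=0$ and that $Z_{n+1}$ contains $(1-p_{i^*})$ as a (commuting) factor, which is the factored form of your involution cancellation.
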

\begin{proof}
We prove that $Z_{n+1}$ is $n$-coherent.
By Lemma~\ref{lem:FoldabilityCriteria} it is enough to prove that for any finite set $T\subset X$ consisting of at most $n$ points the restriction $Z_{n+1}|_T$ is zero.
As the sets $\Mov p_i$ are pairwise disjoint by definition, there is $j$ such that $\Mov p_j \cap T = \emptyset$.
Hence, $(1-p_j)|_T = 0$ and thus $Z_{n+1}|_T = 0$.
\end{proof}

\begin{corollary}\label{cor:TInduceIdentity}
The ensemble $\bar{T}_{n+1}T_{n+1}$ induces the identity map on $C(X)^{\otimes n}$, and the ensemble $T_{n+1}\bar{T}_{n+1}$ induces the identity map on $C(Y)^{\otimes n}$.
\end{corollary}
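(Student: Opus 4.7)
The plan is to combine Lemma \ref{lem:TCalculation} and Lemma \ref{lem:ZAreFoldable} together with the linearity of the ensemble-to-operator construction $A \mapsto A^{(r)}$. By Lemma \ref{lem:TCalculation} the two ensembles in question simplify to $\bar{T}_{n+1}T_{n+1} = 1 - Z_{n+1}$ in $\<X^X\>$ and $T_{n+1}\bar{T}_{n+1} = 1 - \bar{Z}_{n+1}$ in $\<Y^Y\>$. Inspecting the defining formula $A^{(r)}(f_1 \otimes \dots \otimes f_r) = \sum_i u_i (f_1 \circ a_i) \otimes \dots \otimes (f_r \circ a_i)$ shows that $A \mapsto A^{(r)}$ is $\C$-linear in the scalar coefficients of the ensemble $A$, so it distributes over this difference.

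The ensemble $1 \in \<X^X\>$ is the identity map $\id_X$ with coefficient $1$, whose induced operator on $C(X)^{\otimes n}$ sends $f_1 \otimes \dots \otimes f_n$ to itself, hence equals $\id_{C(X)^{\otimes n}}$. By Lemma \ref{lem:ZAreFoldable} the ensemble $Z_{n+1}$ is $n$-coherent, i.e.\ $Z_{n+1}^{(n)} = 0$. Putting these together yields $(\bar{T}_{n+1}T_{n+1})^{(n)} = \id_{C(X)^{\otimes n}} - Z_{n+1}^{(n)} = \id_{C(X)^{\otimes n}}$, and the second assertion follows in exactly the same way with $\bar{Z}_{n+1}^{(n)} = 0$ in place of $Z_{n+1}^{(n)} = 0$. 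There is no real obstacle here; the corollary is a direct bookkeeping consequence of the two prior lemmas, which is presumably why the author states it as a corollary.
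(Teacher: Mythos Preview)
Your argument is correct and is exactly the approach the paper takes: the paper's proof is the single sentence ``Follows from Lemma~\ref{lem:TCalculation} and Lemma~\ref{lem:ZAreFoldable},'' and you have simply unpacked that implication.
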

\begin{proof}
Follows from Lemma~\ref{lem:TCalculation} and Lemma~\ref{lem:ZAreFoldable}.
\end{proof}

\begin{lemma}\label{lem:TAreCompatible}
For $r\in\N$ we have
$$T_{r+1}^{(r-1)} = T_r^{(r-1)}.$$
\end{lemma}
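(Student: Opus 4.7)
The plan is to exploit the telescoping structure of $T_n$ and the commutation rule between the $h_i$'s and the $q_j$'s to reduce the problem to the fact that $Z_r$ is $(r-1)$-coherent, which is already established.

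First, I observe that directly from the definition
\[ T_{r+1} - T_r = \bar{Z}_r\, h_{r+1}, \]
so it suffices to show that the ensemble $\bar{Z}_r\, h_{r+1} \in \<Y^X\>$ is $(r-1)$-coherent.

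Next, I would rewrite this ensemble using the identity $q_j\circ h_{r+1} = h_{r+1}\circ p_j$ from the definition of projections and twists. Since this identity gives $(1-q_j)h_{r+1} = h_{r+1}(1-p_j)$, iterating over $j = 1,\dots, r$ yields
\[ \bar{Z}_r\, h_{r+1} = \prod_{j=1}^r (1-q_j)\, h_{r+1} = h_{r+1} \prod_{j=1}^r (1-p_j) = h_{r+1}\, Z_r. \]

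Finally, I would invoke Lemma~\ref{lem:ZAreFoldable} to finish: for any finite set $T \subset X$ with $|T| \le r-1$, the proof of that lemma shows $Z_r|_T = 0$ (one of the factors $(1-p_j)$ already vanishes on $T$, since the sets $\Mov p_j$ are pairwise disjoint). Composing with $h_{r+1}$ on the left preserves this: $(h_{r+1} Z_r)|_T = h_{r+1}\circ(Z_r|_T) = 0$. Hence by the first half of Lemma~\ref{lem:FoldabilityCriteria} (which does not require $Y$ to be completely Hausdorff), the ensemble $\bar{Z}_r h_{r+1}$ is $(r-1)$-coherent, so $(\bar{Z}_r h_{r+1})^{(r-1)} = 0$ and $T_{r+1}^{(r-1)} = T_r^{(r-1)}$.

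I do not expect any serious obstacle here; the key observation is simply that the commutation relation lets us transport $\bar{Z}_r$ through $h_{r+1}$ to obtain $Z_r$, at which point the previously proved coherence of $Z_r$ applies directly. The only thing to be careful about is that we use the forward (easy) direction of Lemma~\ref{lem:FoldabilityCriteria}, so no completely Hausdorff hypothesis is needed on the target.
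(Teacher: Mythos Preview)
Your proof is correct and shares the same starting point as the paper's: both begin with $T_{r+1}-T_r=\bar Z_r h_{r+1}$ and finish by invoking the $(r-1)$-coherence established in Lemma~\ref{lem:ZAreFoldable}.

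The only difference is that you take an avoidable detour. You use the commutation rule $q_j h_{r+1}=h_{r+1}p_j$ to push $\bar Z_r$ through $h_{r+1}$, obtaining $h_{r+1}Z_r$, and then argue via restrictions to finite $T\subset X$. The paper instead observes that $(-)^{(r-1)}$ is multiplicative for composition of ensembles, so
\[
(\bar Z_r h_{r+1})^{(r-1)} = h_{r+1}^{(r-1)}\circ \bar Z_r^{(r-1)} = 0,
\]
since Lemma~\ref{lem:ZAreFoldable} already gives $\bar Z_r^{(r-1)}=0$ directly (it covers both $Z_{n+1}$ and $\bar Z_{n+1}$). Your commutation step and the passage back through Lemma~\ref{lem:FoldabilityCriteria} are therefore unnecessary, though not wrong; the paper's route is simply one line.
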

\begin{proof}
Consider the ensemble $T_{r+1} - T_r = \bar{Z}_rh_{r+1}$.
We have
$$T_{r+1}^{(r-1)} - T_r^{(r-1)} = \bar{Z}_r^{(r-1)}h_{r+1}^{(r-1)} = 0$$
as $\bar{Z}_r^{(r-1)} = 0$ by Lemma~\ref{lem:ZAreFoldable}.
\end{proof}

Take any surjection $\sigma:\<r\>\to \<r-1\>$.
It is clear that for an arbitrary ensemble $A\in\<Y^X\>$ the following diagram commutes
\begin{equation}\label{eq:1}
\begin{tikzcd}
C(Y)^{\otimes r}\arrow[r,"\sigma^*"]\arrow[d,"A^{(r)}"] & C(Y)^{\otimes r-1}\arrow[d,"A^{(r-1)}"]\\
C(X)^{\otimes r}\arrow[r,"\sigma^*"] & C(X)^{\otimes r-1}.
\end{tikzcd}
\end{equation}

\begin{lemma}\label{lem:TCommute}
The following square is commutative
\begin{equation*}
\begin{tikzcd}
C(Y)^{\otimes r}\arrow[r,"\sigma^*"]\arrow[d,"T_{r+1}^{(r)}"] & C(Y)^{\otimes r-1}\arrow[d,"T_{r}^{(r-1)}"]\\
C(X)^{\otimes r}\arrow[r,"\sigma^*"] & C(X)^{\otimes r-1}.
\end{tikzcd}
\end{equation*}
\end{lemma}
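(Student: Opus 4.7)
My plan is to reduce this to the already-established Lemma~\ref{lem:TAreCompatible} combined with the naturality square~\eqref{eq:1}. The key point is that the right-hand vertical arrow is indexed by $T_r$, not $T_{r+1}$, so we cannot simply apply~\eqref{eq:1} directly to a single ensemble; we first need to move between the two ensembles in the level where it is legal to do so.

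Concretely, I would take the ensemble $A = T_{r+1} \in \<Y^X\>$ and apply the commutative diagram~\eqref{eq:1} to it. This gives the identity
\[
\sigma^* \circ T_{r+1}^{(r)} = T_{r+1}^{(r-1)} \circ \sigma^*
\]
as maps $C(Y)^{\otimes r}\to C(X)^{\otimes r-1}$. Next, I invoke Lemma~\ref{lem:TAreCompatible} to replace $T_{r+1}^{(r-1)}$ by $T_r^{(r-1)}$ on the right-hand side, obtaining
\[
\sigma^* \circ T_{r+1}^{(r)} = T_r^{(r-1)} \circ \sigma^*,
\]
which is exactly the commutativity of the displayed square.

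There is no real obstacle here: the lemma is a direct composition of two previously proved facts. The only subtlety worth pointing out is that naturality~\eqref{eq:1} applies only when the same ensemble appears on both vertical arrows, so one must first pass through the $r$-coherence-type identity $T_{r+1}^{(r-1)} = T_r^{(r-1)}$ (Lemma~\ref{lem:TAreCompatible}) before the square can be recognized as the desired one. In the actual write-up I would explicitly say ``by~\eqref{eq:1} applied to the ensemble $T_{r+1}$ we have $\sigma^*\circ T_{r+1}^{(r)} = T_{r+1}^{(r-1)}\circ\sigma^*$, and by Lemma~\ref{lem:TAreCompatible} the right-hand side equals $T_r^{(r-1)}\circ\sigma^*$''.
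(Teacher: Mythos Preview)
Your proof is correct and follows essentially the same approach as the paper: both combine the naturality square~\eqref{eq:1} for the ensemble $T_{r+1}$ with Lemma~\ref{lem:TAreCompatible} to identify $T_{r+1}^{(r-1)}$ with $T_r^{(r-1)}$. The only difference is cosmetic ordering---the paper first invokes Lemma~\ref{lem:TAreCompatible} to reduce to the $T_{r+1}$-square and then applies~\eqref{eq:1}, whereas you apply~\eqref{eq:1} first and then substitute via Lemma~\ref{lem:TAreCompatible}.
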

\begin{proof}
By Lemma~\ref{lem:TAreCompatible} it is enough to prove that the square
\begin{equation*}
\begin{tikzcd}
C(Y)^{\otimes r}\arrow[r,"\sigma^*"]\arrow[d,"T_{r+1}^{(r)}"] & C(Y)^{\otimes r-1}\arrow[d,"T_{r+1}^{(r-1)}"]\\
C(X)^{\otimes r}\arrow[r,"\sigma^*"] & C(X)^{\otimes r-1}
\end{tikzcd}
\end{equation*}
is commutative, which follows from the general case~(\ref{eq:1}).
\end{proof}

\begin{theorem}\label{thm:main}
Let $X$ and $Y$ be topological spaces that are connected by projections and twists.
Then the Loday functors $\LC_{C(X)}$ and $\LC_{C(Y)}$ are isomorphic.
\end{theorem}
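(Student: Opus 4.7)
The plan is to exhibit mutually inverse natural isomorphisms
$$\tau\colon \LC_{C(Y)}\to \LC_{C(X)} \quad\text{and}\quad \bar\tau\colon \LC_{C(X)}\to\LC_{C(Y)}$$
with components $\tau_{\<r\>}=T_{r+1}^{(r)}$ and $\bar\tau_{\<r\>}=\bar T_{r+1}^{(r)}$. All of the substantive content has been packaged into the preceding lemmas, so the proof reduces to an assembly argument.

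Level-wise invertibility follows from Corollary~\ref{cor:TInduceIdentity} combined with the composition rule $(BA)^{(r)} = A^{(r)}\circ B^{(r)}$ for ensembles $A\in\<Y^X\>$, $B\in\<Z^Y\>$, which is immediate from the defining formula for $A^{(r)}$ (the order reversal reflects the contravariance of $C(-)$). Applying this to the pairs $(\bar T_{r+1},T_{r+1})$ and $(T_{r+1},\bar T_{r+1})$ yields
$$\bar T_{r+1}^{(r)}\circ T_{r+1}^{(r)}=(T_{r+1}\bar T_{r+1})^{(r)}=\id_{C(Y)^{\otimes r}}, \qquad T_{r+1}^{(r)}\circ \bar T_{r+1}^{(r)}=(\bar T_{r+1}T_{r+1})^{(r)}=\id_{C(X)^{\otimes r}},$$
so $\tau_{\<r\>}$ and $\bar\tau_{\<r\>}$ are mutually inverse linear isomorphisms.

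For naturality I would verify that the square with vertical legs $T_{n+1}^{(n)}$, $T_{m+1}^{(m)}$ and horizontal legs $\sigma^*$ commutes for every surjection $\sigma\colon\<n\>\to\<m\>$ in $\Omega$. The idea is to decompose $\sigma$ into generators of $\Omega$: by induction on $n-m$, merging two elements of a common fiber one at a time, every surjection factors as a composite of permutations and elementary surjections $\<r\>\to\<r-1\>$. The elementary case is exactly Lemma~\ref{lem:TCommute}. For a permutation $\pi\colon\<r\>\to\<r\>$ the explicit formula for $A^{(r)}$ shows that $A^{(r)}$ is $S_r$-equivariant for \emph{any} ensemble $A$, so the analogous square commutes trivially. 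Pasting the two kinds of commuting squares then yields naturality for all $\sigma$.

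The main obstacle is slight: essentially it consists of verifying the composition rule $(BA)^{(r)}=A^{(r)}\circ B^{(r)}$, the $S_r$-equivariance of $A^{(r)}$, and the factorization of surjections into generators of $\Omega$. Each of these is a short direct check, but all three must be in place before the assembly goes through.
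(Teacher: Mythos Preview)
Your proposal is correct and follows essentially the same approach as the paper: both define the components of the natural transformations as $T_{r+1}^{(r)}$ and $\bar T_{r+1}^{(r)}$, invoke Corollary~\ref{cor:TInduceIdentity} for level-wise invertibility, and rely on Lemma~\ref{lem:TCommute} for naturality. You are in fact more explicit than the paper about factoring a general surjection into permutations and elementary surjections $\langle r\rangle\to\langle r-1\rangle$ (the paper's proof only draws the $\langle r\rangle\to\langle r-1\rangle$ squares and leaves the rest implicit), but the underlying argument is identical.
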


\begin{proof}
Define the morphism $\Phi:\LC_{C(X)}\to\LC_{C(Y)}$ with the $r$-th component $\Phi:C(X)^{\otimes r}\to C(Y)^{\otimes r}$ being $\bar{T}_{r+1}^{(r)}$ and $\Psi:\LC_{C(Y)}\to\LC_{C(X)}$ with the $r$-th component $\Psi:C(Y)^{\otimes r}\to C(X)^{\otimes r}$ being $T_{r+1}^{(r)}$.
We have a diagram
\begin{equation*}
\begin{tikzcd}[row sep=3em,column sep=4em]
C(X)\arrow[d, shift right, "\Phi"'] & C(X)^{\otimes 2}\arrow[d, shift right, "\Phi"']\arrow[l,shift left]\arrow[l,shift right] & C(X)^{\otimes 3}\arrow[d, shift right, "\Phi"']\arrow[l,shift left]\arrow[l,shift right] & \dots \arrow[l,shift left]\arrow[l,shift right]\\
C(Y)\arrow[u, shift right, "\Psi"'] & C(Y)^{\otimes 2}\arrow[u, shift right, "\Psi"']\arrow[l,shift left]\arrow[l,shift right] & C(Y)^{\otimes 3}\arrow[u, shift right, "\Psi"']\arrow[l,shift left]\arrow[l,shift right] & \dots \arrow[l,shift left]\arrow[l,shift right]
\end{tikzcd}
\end{equation*}
where each square for a chosen surjection $\sigma:\<r\>\to\<r-1\>$ is commutative.
By Corollary~\ref{cor:TInduceIdentity} the vertical maps are isomorphisms.
Hence $\Phi$ and $\Psi$ are mutually inverse natural isomorphisms of the Loday functors.
\end{proof}

\section{The cylinder and the M\"{o}bius strip.}

In this section we show that the cylinder and the M\"{o}bius strip are connected by projections and twists.
By Theorem~\ref{thm:main} these two non-homeomorphic spaces have isomorphic Loday functors.

Define the cylinder $X$ as the quotient of $[0,1]\times [0,1]$ obtained by identifying the vertical sides via $(0,y)\sim (1,y)$ and the M\"{o}bius strip $Y$ as the quotient of $[0,1]\times [0,1]$ obtained by identifying the vertical sides via $(0,y)\sim (1,1-y)$.

Set $x_n = \dfrac{1}{n+1}$; choose $\epsilon_n>0$ so that the intervals $(x_n-\epsilon_n,x_n+\epsilon_n)$ and $(x_m-\epsilon_m,x_m+\epsilon_m)$ do not intersect for any $n\neq m$.
Looking at the picture below denote by $R_n\subset [0,1]\times [0,1]$ the space on the right and denote by $r_n:[0,1]\times [0,1]\to R_n$ some vertical retraction, symmetric about the line $y=\frac{1}{2}$.

\begin{tikzpicture}[scale=0.67]

\begin{scope}[local bounding box=first]
  \begin{scope}[scale=0.5]
    \fill[pattern=north east lines,pattern color=gray]
      (0,0) rectangle (16,8);
  \end{scope}

  \draw (0,0) -- (2,0) -- (4,0) -- (6,0) -- (8,0)
        -- (8,4) -- (6,4) -- (4,4) -- (2,4) -- (0,4) -- cycle;

  \draw (0,0) node[circle,fill,inner sep=1pt,label=below:$0$]{} ;
  \draw (2,0) node[circle,fill,inner sep=1pt,label=below:$x_n-\epsilon _n$]{} ;
  \draw (4,0) node[circle,fill,inner sep=1pt]{} node[below=3pt]{$x_n$} ;
  \draw (6,0) node[circle,fill,inner sep=1pt,label=below:$x_n+\epsilon _n$]{} ;
  \draw (8,0) node[circle,fill,inner sep=1pt,label=below:$1$]{} ;
\end{scope}

\begin{scope}[xshift=10cm, local bounding box=second]
  \begin{scope}[scale=0.5]
    \fill[pattern=north east lines,pattern color=gray]
      (0,0) -- (4,0) -- (8,4) -- (12,0) -- (16,0)
      -- (16,8) -- (12,8) -- (8,4) -- (4,8) -- (0,8) -- cycle;
  \end{scope}

  \draw (0,0) -- (2,0) -- (4,2) -- (6,0) -- (8,0)
        -- (8,4) -- (6,4) -- (4,2) -- (2,4) -- (0,4) -- cycle;

  \draw (0,0) node[circle,fill,inner sep=1pt,label=below:$0$]{} ;
  \draw (2,0) node[circle,fill,inner sep=1pt,label=below:$x_n-\epsilon _n$]{} ;
  \draw (4,0) node[below=3pt]{$x_n$} ;
  \draw (6,0) node[circle,fill,inner sep=1pt,label=below:$x_n+\epsilon _n$]{} ;
  \draw (8,0) node[circle,fill,inner sep=1pt,label=below:$1$]{} ;
\end{scope}

\draw[->, thick] (first.east) --node[above]{$r_n$} (second.west);

\end{tikzpicture}

The composition
$$[0,1]\times [0,1]\xrightarrow{r_n} R_n \hookrightarrow [0,1]\times [0,1]$$
gives rise to the maps $p_n:X\to X$ and $q_n:Y\to Y$.

Denote by $\gamma_n:R_n\to R_n$ the half-twist of the right half of $R_n$,
$$\gamma_n(x,y) = \begin{cases} (x,y),\;\;0\leq x\leq x_n,\\ (x,1-y),\;\; x_n\leq x\leq 1.\end{cases}$$
The composition
$$[0,1]\times [0,1]\xrightarrow{r_n} R_n\xrightarrow{\gamma_n} R_n \hookrightarrow [0,1]\times [0,1]$$
gives rise to the maps $h_n:X\to Y$ and $\bar{h}_n:Y\to X$.

The reader can easily check that this set of maps defines a set of projections and twists.

\Addresses
\end{document}